\newtheorem{theorem}{Theorem}[section]
\newtheorem{lemma}[theorem]{Lemma}
\newtheorem{corollary}[theorem]{Corollary}
\theoremstyle{definition}
\theoremstyle{remark}
\numberwithin{equation}{section}
\begin{document}

\title{\texttt{LinCode} -- computer classification of linear codes}

%    Information for first author
\author{Sascha Kurz}
%    Address of record for the research reported here
\address{Sascha Kurz, Department of Mathematics, Physics and Informatics, University of Bayreuth, Bayreuth, Germany}
%    Current address
%\curraddr{Department of Mathematics and Statistics,
%Case Western Reserve University, Cleveland, Ohio 43403}
\email{sascha.kurz@uni-bayreuth.de}
%    \thanks will become a 1st page footnote.
%\thanks{The first author was supported in part by NSF Grant \#000000.}

%    General info
\subjclass[2000]{Primary 94B05; Secondary 05E20}

%%\date{\today}
\date{}

%\dedicatory{This paper is dedicated to our advisors.}

\begin{abstract}
  We present an algorithm for the classification of linear codes over finite fields, %%which is 
  based on lattice point enumeration. We 
  validate a correct implementation of our algorithm with known classification results from the literature, which we partially extend 
  to larger ranges of parameters.
  
  \medskip
  
  \noindent
  \textit{Keywords: linear code, classification, enumeration, code equivalence, lattice point enumeration}\\
  \textit{ACM:} E.4, G.2, G.4
\end{abstract}

\maketitle

\section{Introduction}
\label{sec_introduction}
Linear codes play a central role in coding theory for several reasons. They permit a compact representation via 
generator matrices as well as efficient coding and decoding algorithms. Also multisets of points in the projective 
space $\operatorname{PG}(k-1,\mathbb{F}_q)$ of cardinality $n$ correspond to linear $[n,k]_q$ codes, see 
e.g.~\cite{dodunekov1998codes}. So, let $q$ be a prime power and $\mathbb{F}_q$ be the field of order $q$. A $q$-ary 
linear code of length $n$, dimension $k$, and minimum (Hamming) distance at least $d$ is called an $[n,k,d]_q$ code. 
If we do not want to specify the minimum distance $d$, then we also speak of an $[n,k]_q$ code or of an 
$\left[n,k,\left\{w_1,\dots,w_l\right\}\right]_q$ if the non-zero codewords have weights in $\left\{w_1,\dots,w_k\right\}$. 
If for the binary case $q=2$ all weights $w_i$ are divisible by $2$, we also speak of an even code.   
We can also look at those codes as $k$-dimensional subspaces of the Hamming space $\mathbb{F}_q^n$. An $[n,k]_q$ code 
can be represented by a generator matrix $G\in\mathbb{F}_q^{k\times n}$ whose row space gives the set of all $q^k$ codewords 
of the code. In the remaining part of the paper we always assume that the length $n$ of a given linear code equals its 
effective length, i.e., for every coordinate there exists a codeword with a non-zero entry in that coordinate. While a 
generator matrix gives a compact representation of a linear code it is far from being unique. Special generator matrices 
are so-called systematic generator matrices, which contain a $k\times k$ unit matrix in the first $k$ columns. If we apply 
row operations of the Gaussian elimination algorithm onto a generator matrix we do not change the code itself but just its 
representation via a generator matrix. Also column permutations or applying field automorphisms do not change the essential 
properties of a linear code. Applying all these transformations, we can easily see that each $[n,k]_q$ code admits an isomorphic 
code with a systematic generator matrix. Already in 1960 Slepian has enumerated binary linear codes for small parameters up to 
isomorphism (or isometry) \cite{slepian1960some}. The general classification problem for $[n,k]_q$ 
codes has not lost its significance since then, see e.g.\ \cite{betten2006error}. In \cite{jaffe2000optimal} all optimal binary 
linear $[n,k,d]_2$ codes up to length $30$ have been completely classified, where in this context optimal means that no $[n-1,k,d]_2$, 
$[n+1,k+1,d]_2$, or $[n+1,k,d+1]_2$ code exists. Classification algorithms for linear codes have been presented in \cite{ostergaard2002classifying}, 
see also \cite[Section 7.3]{kaski2006classification}. A software package \texttt{Q-Extension} is publicly available, see \cite{bouyukliev2007q} 
for a description. The further development to a new version \texttt{QextNewEdition} was recently presented in \cite{bouyukliev2019classification}.
  
The aim of this paper is to present an algorithmic variant for the classification problem for linear codes. It is implemented in 
an evolving software package \texttt{LinCode}. As the implementation of such a software is a delicate issue, we exemplarily verify 
several classification results from the literature and partially extend them. That the algorithm is well suited for parallelization 
is demonstrated e.g.\ by classifying the $1\,656\,768\,624$ even $[21,8,6]_2$ codes. As mentioned in \cite{ostergaard2002classifying}, 
one motivation for the exhaustive enumeration of linear codes with some specific parameters is that afterwards the resulting codes can 
be easily checked for further properties. Exemplarily we do here so for the number of minimal codewords of a linear code, see 
Subsection~\ref{subsec_applications}.

The remaining part of the paper is organized as follows. In Section~\ref{sec_extending} we present the details and the theoretical  
foundation of our algorithm. Numerical enumeration and classification results for linear codes are listed in Section~\ref{sec_results}. 
Finally, we draw a brief conclusion in Section~\ref{sec_conclusion}.

\section{Extending linear codes}
\label{sec_extending}

As mentioned in the introduction, we represent an $[n,k]_q$ code by a systematic generator matrix $G\in\mathbb{F}_q^{k\times n}$, i.e., 
$G$ is of the form $G=\left(I_k|R\right)$, where $I_k$ is the $k\times k$ unit matrix and $R\in\mathbb{F}_q^{k\times (n-k)}$. While this 
representation is quite compact, it nevertheless can cause serious storage requirements if the number of codes get large. Storing all 
generator matrices of the even $[21,8,6]_2$ codes, mentioned in the introduction, needs more than $2.78\cdot 10^{11}$ bits ($1.72\cdot 10^{11}$ 
bits, if the unit matrices are omitted). 

Our general strategy to enumerate linear codes is to start from a (systematic) generator matrix $G$ of a code and to extend $G$ to a 
generator matrix $G'$ of a {\lq\lq}larger{\rq\rq} code. Of course, there are several choices how the shapes of the matrices $G$ and $G'$ 
can be chosen, see e.g.\ \cite{bouyukliev2019classification,ostergaard2002classifying} for some variants. Here we assume the form
$$
  G'=\begin{pmatrix}
    I_k & 0\dots 0  & R \\
    0   & \underset{r}{\underbrace{1\dots 1}} & \star
  \end{pmatrix}
$$ 
where $G=\left(I_k|R\right)$ and $r\ge 1$. Note that if $G$ is a systematic generator matrix of an $[n,k]_q$ code, then $G'$ is a  
systematic generator matrix of an $[n+r,k+1]_q$ code. Typically there will be several choices for the $\star$s and some of these can 
lead to isomorphic codes. So, in any case we will have to face the problem that we are given a set $\mathcal{C}$ of linear codes and 
we have 	to sift out all isomorphic copies. In the literature several variants of definitions of isomorphic codes can be found. Here 
we stick to \cite[Definition 1.4.3]{betten2006error} of linearly isometric codes, i.e., linearity and the Hamming distance between pairs 
of codewords are preserved. This assumption boils down to permutations of the coordinates and applying field automorphisms, see e.g.\ 
\cite[Section 1.4]{betten2006error} for the details. A classical approach for this problem is to reformulate the linear code as 
a graph, see \cite{bouyukliev2007code}, and then to compare canonical forms of graphs using the software package \texttt{Nauty} 
\cite{mckay1990nauty}, see also \cite{ostergaard2002classifying}. In our software we use the implementation from \texttt{Q-Extension} 
as well as another direct algorithmic approach implemented in the software \texttt{CodeCan} \cite{feulner2009automorphism}. In our 
software, we can switch between these two tools to sift out isomorphic copies and we plan to implement further variants. The reason 
to choose two different implementations for the same task is to independently validate results.\footnote{Moreover, there are some technical 
limitations when applying \texttt{Q-Tools} from \texttt{Q-Extension} to either many codes or codes with a huge automorphism group. Also 
the field size is restricted to be at most $4$. As far as we know, the new version \texttt{QextNewEdition} does not have such limitations.}     
 
It remains to solve the extension problem from a given generator matrix $G$ to all possible extension candidates $G'$.  To this end we 
utilize the geometric description of the linear code generated by $G$ as a multiset $\mathcal{M}$ of points in $\operatorname{PG}(k-1,\mathbb{F}_q)$, 
where
$$
  \mathcal{M}=\left\{\left\{  \langle g^i\rangle\,:\, 1\le i\le n  \right\}\right\},\footnote{We use the notation $\{\{\cdot\}\}$ to emphasize that 
  we are dealing with multisets and not ordinary sets. A more precise way to deal with a multiset $\mathcal{M}$ in $\operatorname{PG}(k-1,\mathbb{F}_q)$ 
  is to use a characteristic function $\chi$ which maps each point $P$ of $\operatorname{PG}(k-1,\mathbb{F}_q)$ to an integer, which is the number 
  of occurences of $P$ in $\mathcal{M}$. With this, the cardinality $\#\mathcal{M}$ can be writen as the sum over $m(P)$ for all points 
  $P$ of $\operatorname{PG}(k-1,\mathbb{F}_q)$.} 
$$ 
$g^i$ are the $n$ columns of $G$, and $\langle v\rangle$ denotes the row span of a column vector $v$. In general, the $1$-dimensional subspaces of 
$\mathbb{F}_q^k$ are the points of $\operatorname{PG}(k-1,\mathbb{F}_q)$. The $(k-1)$-dimensional subspaces of $\mathbb{F}_q^k$ are called the 
hyperplanes of $\operatorname{PG}(k-1,\mathbb{F}_q)$. By $m(P)$ we denote the multiplicity of 
a point $P\in\mathcal{M}$. We also say that a column $g^i$ of the generator matrix has multiplicity $m(P)$, where $P=\langle g^i\rangle$ is the 
corresponding point, noting that the counted columns can differ by a scalar factor. Similarly, let $\mathcal{M}'$ denote the multiset of points 
in $\operatorname{PG}((k+1)-1,\mathbb{F}_q)$ that corresponds to the code generated by the generator matrix $G'$. Note that our notion of isomorphic 
linear codes goes in line with the notion of isomorphic multisets of points in projective spaces, see \cite{dodunekov1998codes}. Counting column 
multiplicities indeed partially takes away the inherent symmetry of the generator matrix of a linear code, i.e., the ordering of the columns and 
multiplications of columns with non-zero field elements is not specified explicitly any more. If the column multiplicity of every column is exactly one, 
then the code is called projective.

Our aim is to reformulate the extension problem $G\rightarrow G'$ as an enumeration problem of integral points in a polyhedron. %%To this end let 
Let $W\subseteq \{i\Delta\,:a\le i\le b\}\subseteq \mathbb{N}_{\ge 1}$ be a set of feasible weights for the non-zero codewords, where we assume $1\le a\le b$ 
and $\Delta\ge 1$.\footnote{Choosing $\Delta=1$ such a representation is always possible. Moreover, in many applications we can choose $\Delta>1$ quite 
naturally. I.e., for optimal binary linear $[n,k,d]_2$ codes with even minimum distance $d$, i.e., those with maximum possible $d$,  we can always assume 
that there exists an \emph{even} code, i.e., a code where all weights are divisible by $2$.} Linear codes where all weights of the codewords are divisible 
by $\Delta$ are called $\Delta$-divisible and introduced by Ward, see e.g.~\cite{ward2001divisible,ward1981divisible}. 

The non-zero codewords of the code generated by the generator 
matrix $G$ correspond to the non-trivial linear combinations of the rows of $G$ (over $\mathbb{F}_q$). In the geometric setting, i.e., where an $[n,k]_q$ code $C$  
is represented by a multiset $\mathcal{M}$, each non-zero codeword $c\in C$ corresponds to a hyperplane $H$ of the projective space 
$\operatorname{PG}(k-1,\mathbb{F}_q)$. (More precisely, $\mathbb{F}_q^*\cdot c$ is in bijection to $H$, where $\mathbb{F}_q^*=\mathbb{F}_q\backslash\{0\}$.) 
With this, the Hamming weight of a codeword $c$ is given by $$n-\sum_{P\in\operatorname{PG}(k-1,\mathbb{F}_q)\,:\,P \in \mathcal{M},\,P\le H} m(P),$$ see 
\cite{dodunekov1998codes}. By $\mathcal{P}_{k}$ we denote the set of points of $\operatorname{PG}(k-1,\mathbb{F}_q)$ and by $\mathcal{H}_k$ the set of hyperplanes.

\begin{lemma}
  \label{lemma_ILP}
  Let $G$ be a systematic generator matrix of an $[n,k]_q$ code $C$ whose non-zero weights are contained in $\{i\Delta\,:a\le i\le b\}\subseteq \mathbb{N}_{\ge 1}$. 
  By $c(P)$ we denote the number of columns of $G$ whose row span equals $P$ for all points $P$ of $\operatorname{PG}(k-1,\mathbb{F}_q)$ and set $c(\mathbf{0})=r$ 
  for some integer $r\ge 1$. With this let $\mathcal{S}(G)$ be the set of feasible solutions of 
  \begin{eqnarray}
    \Delta y_H+\sum_{P\in\mathcal{P}_{k+1}\,:\,P\le H} x_P =n-a\Delta&&\forall H\in\mathcal{H}_{k+1}\label{eq_hyperplane}\\
    \sum_{q\in\mathbb{F}_q} x_{\langle (u |q)\rangle } =c(\langle u\rangle ) && \forall \langle u\rangle \in\mathcal{P}_k \cup\{\mathbf{0}\} \label{eq_c_sum}\\
    x_{\langle e_i\rangle}\ge 1&&\forall 1\le i\le k+1\label{eq_systematic}\\
    x_P\in \mathbb N &&\forall P\in\mathcal{P}_{k+1}\\ 
    y_H\in\{0,...,b-a\} && \forall H\in\mathcal{H}_{k+1}\label{hyperplane_var},
  \end{eqnarray}
  where $e_i$ denotes the $i$th unit vector in $\mathbb{F}_q^{k+1}$. Then, for every systematic generator matrix $G'$ of an $[n+r,k+1]_q$ code $C'$ 
  whose first $k$ rows coincide with $G$ and whose weights of its non-zero codewords are contained in $\{i\Delta\,:\, a\le i\le b\}$, we have a solution 
  $(x,y)\in\mathcal{S}(G)$ such that $G'$ has exactly $x_P$ columns whose row span is equal 
  to $P$ for each $P\in\mathcal{P}_{k+1}$.
\end{lemma}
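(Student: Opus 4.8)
The plan is to start from an arbitrary systematic $G'$ with the stated properties, to read a candidate solution $(x,y)$ off $G'$, and then to check the five constraint families defining $\mathcal{S}(G)$ one at a time. For every point $P\in\mathcal{P}_{k+1}$ set $x_P$ to be the number of columns of $G'$ whose row span equals $P$, i.e.\ the multiplicity of $P$ in the multiset $\mathcal{M}'$ associated with $G'$. These are non-negative integers, so the non-negativity and integrality requirements hold automatically; and since $G'$ is systematic its first $k+1$ columns are the unit vectors $e_1,\dots,e_{k+1}$, so every point $\langle e_i\rangle$ occurs at least once and \eqref{eq_systematic} holds.

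Next I would verify \eqref{eq_c_sum}, which is purely structural and does not use the weight hypothesis. By the assumed shape of $G'$, deleting its last row yields the matrix $\left(I_k\mid 0\dots 0\mid R\right)$, whose multiset of columns is exactly that of $G=\left(I_k\mid R\right)$ together with $r$ zero columns. Hence, for a fixed $\langle u\rangle\in\mathcal{P}_k$, the columns of $G'$ whose first $k$ coordinates span $\langle u\rangle$ are precisely the $c(\langle u\rangle)$ columns of $G$ with row span $\langle u\rangle$, and in $\mathcal{P}_{k+1}$ each of them spans one of the $q$ points $\langle (u|\lambda)\rangle$, $\lambda\in\mathbb{F}_q$; summing the corresponding multiplicities gives $\sum_{\lambda\in\mathbb{F}_q}x_{\langle (u|\lambda)\rangle}=c(\langle u\rangle)$. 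For $u=\mathbf{0}$ the only column of $G'$ with vanishing first $k$ coordinates is $e_{k+1}$, which occurs $r$ times, so $x_{\langle e_{k+1}\rangle}=r=c(\mathbf{0})$, the remaining instance of \eqref{eq_c_sum}.

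It then remains to construct $y$ and to check \eqref{eq_hyperplane} together with \eqref{hyperplane_var}; this is the only place the hypothesis on the weights of $C'$ is used. Each hyperplane $H\in\mathcal{H}_{k+1}$ equals $v^{\perp}$ for some $v\in\mathbb{F}_q^{k+1}\setminus\{\mathbf{0}\}$, and the codeword $vG'$ of $C'$ is nonzero and, up to a nonzero scalar, determined by $H$; by the weight formula recalled before the lemma, its Hamming weight equals $\#\mathcal{M}'-\sum_{P\in\mathcal{P}_{k+1}\,:\,P\le H}x_P$, where $\#\mathcal{M}'$ is the length of $C'$. As $H$ runs over $\mathcal{H}_{k+1}$ these codewords exhaust all nonzero codewords of $C'$ up to scalars, so by hypothesis each of these weights lies in $\{i\Delta:a\le i\le b\}$; writing such a weight as $i_H\Delta$ with $a\le i_H\le b$ and setting $y_H:=i_H-a$ gives $y_H\in\{0,\dots,b-a\}$, that is, \eqref{hyperplane_var}, while substituting the weight formula into $i_H\Delta=\Delta y_H+a\Delta$ yields \eqref{eq_hyperplane}. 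Putting the steps together shows $(x,y)\in\mathcal{S}(G)$, and by construction $G'$ has exactly $x_P$ columns of row span $P$ for each $P\in\mathcal{P}_{k+1}$, which is the assertion.

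I expect the whole argument to be bookkeeping rather than any genuinely hard step: one must keep track of the correspondence between the columns of $G$ and those of $G'$ obtained by deleting the last coordinate so that \eqref{eq_c_sum} comes out correctly, treat the degenerate index $u=\mathbf{0}$ (with the convention that $x_{\mathbf{0}}$ is absent, equivalently $0$) consistently, and apply the weight formula with the length of $C'$ rather than of $C$. One point worth stating explicitly is that, once $x$ and $y$ have been read off the single matrix $G'$, all five constraint families hold simultaneously, because each was verified against that same $G'$.
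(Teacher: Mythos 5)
Your proposal follows the same route as the paper's (much terser) proof: read the $x_P$ off the columns of $G'$, note that the systematic form gives \eqref{eq_systematic}, that deleting the appended row gives \eqref{eq_c_sum}, and that the weight condition on the codewords of $C'$ gives \eqref{eq_hyperplane} and \eqref{hyperplane_var}. All of your individual verifications are correct, including the treatment of $u=\mathbf{0}$.

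There is, however, one point where your write-up asserts something that does not match what you actually derived, and you should not have let it pass silently. You correctly apply the weight formula with the length of $C'$, obtaining $\operatorname{wt}=(n+r)-\sum_{P\le H}x_P=i_H\Delta$, which after substituting $y_H=i_H-a$ gives
\begin{equation*}
  \Delta y_H+\sum_{P\in\mathcal{P}_{k+1}\,:\,P\le H}x_P \;=\; n+r-a\Delta ,
\end{equation*}
whereas the constraint as printed in the lemma has right-hand side $n-a\Delta$. Since the lemma fixes $n$ as the length of $C$ and $n+r$ as the length of $C'$, these differ by $r$; your claim that the substitution ``yields \eqref{eq_hyperplane}'' is therefore literally false for the equation as stated. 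The discrepancy is a typo in the lemma rather than an error in your argument: the paper's own worked example (extending the $[6,1]_2$ code with $r=1$, $\Delta=2$, $a=2$) has right-hand side $3=7-4=(n+r)-a\Delta$, not $n-a\Delta=2$. A careful proof should either correct the constant to $n+r-a\Delta$ or explicitly flag the inconsistency; as written, your final step papers over it.
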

\begin{proof}
  Let such a systematic generator matrix $G'$ be given and $x_P$ denote the number of columns of $G'$ whose row span is  equal to $P$ for all points $P\in\mathcal{P}_{k+1}$. 
  Since $G'$ is systematic, Equation~(\ref{eq_systematic}) is satisfied. As $G'$ arises by appending a row to $G$, also Equation~(\ref{eq_c_sum}) is satisfied 
  for all $P\in \mathcal{P}_k$. For $P=\mathbf{0}$ Equation~(\ref{eq_c_sum}) is just the specification of $r$. Obviously, the $x_P$ are non-negative 
  integers. The conditions (\ref{eq_hyperplane}) and (\ref{hyperplane_var}) correspond to the restriction that the weights are contained in 
  $\{i\Delta\,:\, a\le i\le b\}$.   
\end{proof}
We remark that some of the constraints (\ref{eq_hyperplane}) are automatically satisfied since the subcode $C$ of $C'$ satisfies all constraints on the weights. If there 
are further forbidden weights in $\{i\Delta\,:a\le i\le b\}$ then, one may also use the approach of Lemma~\ref{lemma_ILP}, but has to filter out the integer solutions that 
correspond to codes with forbidden weights. Another application of this first generate, then filter strategy is to remove some of the constraints (\ref{eq_hyperplane}), which 
speeds up, at least some, lattice point enumeration algorithms. In our implementation we use \texttt{Solvediophant} \cite{wassermann2002attacking}, which is based on the 
LLL algorithm \cite{lenstra1982factoring}, to enumerate the integral points of the polyhedron from Lemma~\ref{lemma_ILP}.

Noting that each $[n',k',W]_q$ code, where $W\subseteq \mathbb{N}$ is a set of weights, can indeed be obtained by extending\footnote{This operation is also called \emph{lengthening} 
in the coding theoretic literature, i.e., 
both the effective length $n$ and the dimension $k$ is increased, while one usually assumes that the redundancy $n-k$ remains fix. The reverse operation is called  
\emph{shortening}.} all possible $[n'-r,k'-1,W]_q$ codes via Lemma~\ref{lemma_ILP}, where $1\le r\le n'-k'+1$, 
already gives an algorithm for enumerating and classifying $[n',k',W]_q$ codes. (For $k'=1$ there exists a unique code for each weight $w\in W$, which admits 
a generator matrix consisting of $w$ ones.) However, the number of codes $C$ with generator matrix $G$ that yield the same $[n',k',W]_q$ code $C'$ with generator 
matrix $G'$ can grow exponentially with $k'$. We can limit this growth a bit by studying the effect of the extension operation and its reverse on some code invariants.

\begin{lemma}
  \label{lemma_shortening}
  Let $C'$ be an $[n',k',W]_q$ code with generator matrix $G'$. If $G'$ contains a column $g'$ of multiplicity $r\ge 1$, then there exists a generator matrix $G$ of an $[n'-r,k'-1,W]_q$ 
  code $C$ such that the extension of $G$ via Lemma~\ref{lemma_ILP} yields at least one code that is isomorphic to $C'$.   
  Moreover, if $\Lambda$ is the maximum column multiplicity of $G'$, without counting the columns whose row span equals $\langle g'\rangle$, then the maximum column multiplicity of 
  $G$ is at least $\Lambda$.
\end{lemma}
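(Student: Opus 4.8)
The plan is to undo the extension construction of Lemma~\ref{lemma_ILP} by an appropriate change of basis together with a reordering and rescaling of coordinates, all of which preserve the isometry type of a linear code. Since permuting and rescaling the columns of $G'$ yields an isometric code and alters no column multiplicity, I would first assume without loss of generality that the $r$ columns of $G'$ whose row span equals $\langle g'\rangle$ are the last $r$ columns and that each of them equals one fixed nonzero vector $v\in\mathbb{F}_q^{k'}$. Choosing an invertible matrix $T\in\mathbb{F}_q^{k'\times k'}$ with $Tv=e_{k'}$, I pass to $N:=TG'$, which still generates $C'$ and now has the block shape $N=\left(\begin{smallmatrix} A & \mathbf{0}\\ w & 1\cdots 1\end{smallmatrix}\right)$, where $A\in\mathbb{F}_q^{(k'-1)\times(n'-r)}$, $w$ is a row vector of length $n'-r$, the zero block has size $(k'-1)\times r$, and the trailing all-ones block has length $r$ (because the last $r$ columns of $N$ are $Tv=e_{k'}$).

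Next I would verify that $A$ is the generator matrix of an $[n'-r,k'-1,W]_q$ code $C$. It has full row rank, since a linear dependence among the rows of $A$ would produce one among the first $k'-1$ rows of $N$, contradicting $\operatorname{rank}N=k'$. It has no zero column, since a zero column of $A$ would force the corresponding column of $N$, hence of $G'$, to have row span $\langle v\rangle=\langle g'\rangle$, contradicting our normalization. And every nonzero codeword of $C$ is the restriction to the first $n'-r$ coordinates of a nonzero codeword of $C'$ that vanishes on the last $r$ coordinates, hence has the same Hamming weight, so the nonzero weights of $C$ lie in $W\subseteq\{i\Delta:a\le i\le b\}$. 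The remaining and, I expect, most delicate point is to recover $C'$ as an extension of $C$ in the rigid sense of Lemma~\ref{lemma_ILP}, since that lemma is phrased for a specifically shaped pair $(G,G')$. I would fix a systematic generator matrix $G=(I_{k'-1}\mid R)$ of $C$, obtained from $A$ by row operations and a column permutation, apply the same transformations to $N$, then move the $r$ all-ones columns to the positions right after the leading identity block, and finally use further row operations to clear the last row on the first $k'-1$ coordinates. This brings $N$ into the exact form $\left(\begin{smallmatrix} I_{k'-1} & 0\cdots 0 & R\\ 0 & 1\cdots 1 & \star\end{smallmatrix}\right)$ of Lemma~\ref{lemma_ILP} with $k=k'-1$, $n=n'-r$ and $c(\mathbf{0})=r$; since every intermediate step is a basis change or a coordinate permutation, the resulting systematic matrix still generates a code isomorphic to $C'$, its first $k'-1$ rows coincide with $G$, and its nonzero weights lie in $\{i\Delta:a\le i\le b\}$. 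Hence Lemma~\ref{lemma_ILP} yields a solution $(x,y)\in\mathcal{S}(G)$ whose associated multiset of points in $\operatorname{PG}(k',\mathbb{F}_q)$ is, up to isomorphism, exactly $C'$.

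For the ``moreover'' part I would use that column multiplicities are invariant under the row operations and column permutations relating $G$ to $A$, so it suffices to bound the maximum column multiplicity of $A$. Deleting the last coordinate maps each of the first $n'-r$ columns of $N$ to the corresponding column of $A$, and this map can only merge projective points: if two such columns of $N$ are scalar multiples, then so are the corresponding columns of $A$. Therefore, for every point $Q\neq\langle e_{k'}\rangle$ of $\operatorname{PG}(k'-1,\mathbb{F}_q)$, the multiplicity in $A$ of the point obtained from $Q$ by deleting the last coordinate is at least the multiplicity of $Q$ in $N$; and since $T$ is invertible, the multiplicities of points in $N$ and in $G'$ agree under the bijection induced by $T$, with $\langle e_{k'}\rangle$ corresponding to $\langle g'\rangle$. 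Taking the maximum over all points other than $\langle g'\rangle$ then shows that the maximum column multiplicity of $G$ is at least $\Lambda$ (the case $k'=1$, where no such point exists, being trivial).
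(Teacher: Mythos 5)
Your proof is correct and follows essentially the same route as the paper: transform $g'$ into a unit vector by a basis change, delete the corresponding row and the $r$ columns to obtain the shortened $[n'-r,k'-1,W]_q$ code, check that nonzero weights are inherited from codewords of $C'$ vanishing on the removed coordinates, and observe that deleting a row cannot decrease column multiplicities. The paper's version is much terser but identical in substance; your additional care in bringing the matrix into the exact block shape required by Lemma~\ref{lemma_ILP} and in tracking point multiplicities under the coordinate-deletion map only spells out steps the paper leaves implicit.
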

\begin{proof}
  Consider a transform $\tilde{G}$ of $G'$ such that the column $g'$ of $G'$ is turned into the $j$th unit vector $e_j$ for some integer $1\le j\le k'$. Of course also $\tilde{G}$ is 
  a generator matrix of $C'$. Now let $\hat{G}$ be the $(k'-1)\times (n'-r)$-matrix over $\mathbb{F}_q$ that arises from $\tilde{G}$ after removing the $r$ occurrences of the 
  columns with row span $\langle e_j\rangle$ and additionally removing the $j$th row. Note that the non-zero weights of the linear code generated by $\hat{G}$ are also contained 
  in $W$. If $G$ is a systematic generator matrix of the the linear code $C$ generated by $\hat{G}$, then Lemma~\ref{lemma_ILP} applied to $G$ with the chosen parameter $r$ yields 
  especially a linear code with generator matrix $G'$ as a solution. By construction the effective length of $C$ is indeed $n'-r$. Finally, note that removing a row from a generator 
  matrix does not decrease column multiplicities.    
\end{proof}

\begin{corollary}
  \label{cor_shortening}
  Let $C'$ be an $[n',k',W]_q$ code with generator matrix $G'$ and minimum column multiplicity $r$. Then there exists a generator matrix $G$ of an $[n'-r,k'-1,W]_q$ 
  code $C$ with minimum column multiplicity at least $r$ such that the extension of $G$ via Lemma~\ref{lemma_ILP} yields at least one code that is isomorphic $C'$.
\end{corollary}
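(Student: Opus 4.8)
The plan is to combine Lemma~\ref{lemma_shortening} with a short bookkeeping argument on column multiplicities. Since $G'$ has minimum column multiplicity $r$, we may select a column $g'$ of $G'$ whose multiplicity equals $r$; in particular $r\ge 1$, so Lemma~\ref{lemma_shortening} applies to $G'$ and this choice of $g'$. It directly yields a generator matrix $G$ of an $[n'-r,k'-1,W]_q$ code $C$ (of effective length exactly $n'-r$) whose extension via Lemma~\ref{lemma_ILP} produces at least one code isomorphic to $C'$. It therefore only remains to verify that the minimum column multiplicity of $G$ is at least $r$.

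For this I would retrace the construction used in the proof of Lemma~\ref{lemma_shortening}. First, $\tilde G$ arises from $G'$ by row operations turning $g'$ into a unit vector $e_j$; row operations act on the associated multiset of points as a fixed projective transformation, hence $\tilde G$ and $G'$ have the same multiset of column multiplicities, in particular minimum column multiplicity $r$. Next, $\hat G$ is obtained from $\tilde G$ by deleting the $r$ columns with row span $\langle e_j\rangle$ and deleting the $j$th row. Removing the $\langle e_j\rangle$-class of columns does not affect the multiplicity of any surviving column. Removing the $j$th row cannot decrease a column multiplicity: if two columns of $\tilde G$ have the same row span, then so do their restrictions to the remaining rows, so each row-span class of $\hat G$ contains at least one full row-span class of $\tilde G$, and its multiplicity is therefore at least $r$. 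Finally, passing from $\hat G$ to a systematic generator matrix $G$ of $C$ is again done by row operations together with a column permutation, neither of which changes the multiset of column multiplicities. Hence every column of $G$ has multiplicity at least $r$, which is the claim.

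There is no real obstacle here; the corollary is essentially a specialization of Lemma~\ref{lemma_shortening}, and the only point needing a little care is that deleting a row may fuse several row-span classes into one, so column multiplicities can only increase under this operation -- which is exactly the direction required for the lower bound.
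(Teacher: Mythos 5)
Your proof is correct and follows exactly the route the paper intends: the corollary is a direct specialization of Lemma~\ref{lemma_shortening}, and your multiplicity bookkeeping (row operations preserve column multiplicities, deleting a row can only merge row-span classes and hence only increase multiplicities) is precisely the observation the paper records in the last sentence of the proof of Lemma~\ref{lemma_shortening}. No gaps.
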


Corollary~\ref{cor_shortening} has multiple algorithmic implications. If we want to classify all $[n,k,W]_q$ codes, then we need the complete lists of $[\le n-1,k-1,W]_q$ codes, 
where $[\le n',k',W'_q]$ codes are those with an effective length of at most $n'$. Given an $[n',k-1,W]_q$ code with $n'\le n-1$ we only need to extend those codes which 
have a minimum column multiplicity of at least $n-n'$ via Lemma~\ref{lemma_ILP}. If $n-n'>1$ this usually reduces the list of codes, where an extensions needs to be computed. 
Once the set $\mathcal{S}(G)$ of feasible solutions is given, we can also sift out some solutions before applying the isomorphism sifting step. Corollary~\ref{cor_shortening} 
allows us to ignore all resulting codes which have a minimum column multiplicity strictly smaller than $n-n'$. Note that when we know $x_{P}>0$, which we do know e.g.\ 
for $P=\langle e_i\rangle$, where $1\le i\le k+1$, then we can add the valid inequality $x_P\ge n-n'$ to the inequality system from Lemma~\ref{lemma_ILP}. We call the application 
of the extension step of Lemma~\ref{lemma_ILP} under these extra assumptions \emph{canonical length extension} or \emph{canonical lengthening}.
 
As an example we consider the $[7,2]_2$ code that arises from two codewords of Hamming weight $4$ whose support intersect in cardinality $1$, i.e., their sum has Hamming weight $6$. 
A direct construction gives the generator matrix
$$
 G_1=
  \begin{pmatrix}
    1 & 1 & 1 & 1 & 0 & 0 & 0\\ 
    0 & 0 & 0 & 1 & 1 & 1 & 1 
  \end{pmatrix},
$$
which can be transformed into 
$$
 G_2=
  \begin{pmatrix}
    1 & 1 & 1 & 1 & 0 & 0 & 0\\ 
    1 & 1 & 1 & 0 & 1 & 1 & 1 
  \end{pmatrix}. 
$$ 
Now column permutations are necessary to obtain a systematic generator matrix
$$
 G_3=
  \begin{pmatrix}
    1 & 0 & 0 & 0 & 1 & 1 & 1 \\ 
    0 & 1 & 1 & 1 & 1 & 1 & 1 
  \end{pmatrix}. 
$$ 
Note that $G_2$ and $G_3$ do not generate the same but only isomorphic codes. Using the canonical length extension the systematic 
generator matrix
$$
 G_0=
  \begin{pmatrix}
    1 & 1 & 1 & 1 
  \end{pmatrix} 
$$ 
of a single codeword of Hamming weight $4$ cannot be extended to $G_3$, since we would need to choose $r=3$ to get from a $[4,1]_2$ code to a 
$[7,2]_2$ code, while the latter code has a minimum column multiplicity of $1$. However, the unique codeword with Hamming weight $6$ and 
systematic generator matrix
$$
  G=
  \begin{pmatrix}
    1 & 1 & 1 & 1 & 1 & 1 
  \end{pmatrix} 
$$ 
can be extended to
$$
 G_4=
  \begin{pmatrix}
    1 & 0 & 1 & 1 & 1 & 1 & 1 \\ 
    0 & 1 & 0 & 0 & 1 & 1 & 1 
  \end{pmatrix}, 
$$ 
which generates the same code as $G_3$. So, we needed to consider an extension of a $[6,1]_2$ code to a $[7,2]_2$ code. Now let us dive into the 
details of the integer linear programming formulation of Lemma~\ref{lemma_ILP}. In our example we have $k=1$ and $q=2$, so that 
$\mathcal{P}_1=\left\{\langle (1)\rangle\right\}$, and 
$$
  \mathcal{P}_2=\left\{
  \left\langle\begin{pmatrix}1\\0\end{pmatrix}\right\rangle,\left\langle\begin{pmatrix}0\\1\end{pmatrix}\right\rangle,\left\langle\begin{pmatrix}1\\1\end{pmatrix}\right\rangle  
  \right\}.
$$
The multiplicities corresponding to the columns of $G$ and $r$ are given by
$$
  c(\langle(1)\rangle)=6\quad\text{and}\quad c(\langle(0)\rangle)=1.
$$
Due to constraint~(\ref{eq_c_sum}) we have 
$$
  x_{\langle e_1\rangle}+x_{\langle e_1+e_2\rangle} =6\quad\text{and}\quad x_{\langle e_2\rangle}=1.
$$
Constraint~(\ref{eq_systematic}) reads
$$
  x_{\langle e_1\rangle}\ge 1\quad\text{and}\quad x_{\langle e_2\rangle}\ge 1.
$$
In order to write down constraint~(\ref{eq_hyperplane}), we need to specify the set $W$ of allowed weights. Let us choose $W=\{4,6\}$, i.e., $\Delta=2$, $a=2$, and $b=3$. 
If we label the hyperplanes by $\mathcal{H}=\left\{1,2,3\right\}$, for the ease of notation, we obtain
\begin{eqnarray*}
  2y_1+x_{\langle e_2\rangle} &=& 3,\\
  2y_2+x_{\langle e_1+e_2\rangle} &=& 3,\text{ and}\\
  2y_3+x_{\langle e_1\rangle} &=& 3.
\end{eqnarray*}
Since the $y_i$ are in $\{0,1\}$ we have $x_{\langle e_1\rangle}\le 3$ and $x_{\langle e_1+e_2\rangle}\le 3$, so that 
$x_{\langle e_1\rangle}= 3$ and $x_{\langle e_1+e_2\rangle}=3$. The remaining variables are given by $x_{\langle e_2\rangle}=1$, $y_1=1$, $y_2=0$, and $y_3$. 
Thus, in our example there is only one unique solution, which then corresponds to generator matrix $G_4$ (without specifying the exact ordering of the columns 
of $G_4$). 

Note that for the special situation $k+1=2$, every hyperplane of $\mathcal{P}_2$ consists of a unique point. The set of column or point multiplicities is left invariant 
by every isometry of a linear code. For hyperplanes in $\operatorname{PG}(k+1,\mathbb{F}_q)$ or non-zero codewords of $C'$ a similar statement applies. To this end we
introduce the weight enumerator $w_C(x)=\sum_{i=0}^n A_ix^i$ of a linear code $C$, where $A_i$ counts the number of codewords of Hamming weight exactly $i$ in $C$. Of 
course, the weight enumerator $w_C(x)$ of a linear code $C$ does not depend on the chosen generator matrix $C$. The geometric reformulation uses the number $a_i$ 
of hyperplanes $H\in\mathcal{H}_k$ with $\# H\cap \mathcal{M}:=\sum_{P\in\mathcal{P}_k\,:\,P\in \mathcal{M},\,P\le H} m(P)=i$. The counting vector $\left(a_0,\dots, a_n\right)$ 
is left unchanged by isometries. One application of the weight enumerator in our context arises when we want to sift out isomorphic copies from a list $\mathcal{C}$ of 
linear codes. Clearly, two codes whose weight enumerators do not coincide, cannot be isomorphic. So, we can first split $\mathcal{C}$ according to the occurring 
different weight enumerators and then apply one of the mentioned algorithms for the ismorphism filtering on the smaller parts separately. We can even refine this 
invariant a bit more. For a given $[n,k]_q$ code $C$ with generator matrix $G$ and corresponding multiset $\mathcal{M}$ let $\widetilde{\mathcal{M}}$ be the set of 
different elements in $\mathcal{M}$, i.e., $\#\mathcal{M}=\sum_{P\in\widetilde{\mathcal{M}}} m(P)$, which means that we ignore the multiplicities in 
$\widetilde{\mathcal{M}}$. With this we can refine Lemma~\ref{lemma_shortening}: 

\begin{lemma}
  \label{lemma_shortening_refined}
  Let $C$ be an $[n,k,W]_q$ code with generator matrix $G$ and $\mathcal{M}$, $\widetilde{\mathcal{M}}$ as defined above. For each $P\in \widetilde{\mathcal{M}}$ 
  there exists a generator matrix $G_P$ of an $[n-m(P),k-1]_q$ code such that the extension of $G_P$ via Lemma~\ref{lemma_ILP} yields at least one code that is isomorphic to $C$.  
\end{lemma}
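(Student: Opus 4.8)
The plan is to run the argument in the proof of Lemma~\ref{lemma_shortening} essentially verbatim, the only change being that the distinguished column $g'$ there is replaced by an arbitrary column of $G$ whose row span equals the prescribed point $P$. Such a column exists precisely because $P\in\widetilde{\mathcal{M}}$, and it has multiplicity $m(P)\ge 1$, which will take the role of the parameter $r$ in Lemma~\ref{lemma_ILP}. In other words, Lemma~\ref{lemma_shortening_refined} is just Lemma~\ref{lemma_shortening} re-read in multiset language and applied simultaneously to every point actually occurring in $\mathcal{M}$ rather than to a single singled-out column.

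First I would apply an invertible row transformation to $G$ --- a change of basis of $\mathbb{F}_q^k$, which does not change the code $C$ --- turning the $m(P)$ columns with row span $P$ into scalar multiples of a fixed unit vector $e_j$; after a permutation of rows we may assume $j=k$. Call the result $\tilde{G}$. Deleting these $m(P)$ columns together with the $k$th row of $\tilde{G}$ yields a $(k-1)\times(n-m(P))$ matrix $\hat{G}$. Two small observations are needed here, exactly as in Lemma~\ref{lemma_shortening}: every codeword of the code generated by $\hat{G}$ comes from a codeword of $C$ that vanishes on the deleted coordinates and has the same Hamming weight, so the non-zero weights of $\langle\hat{G}\rangle$ still lie in $W$; and no column of $\hat{G}$ is the zero vector, since such a column would have had row span $\langle e_k\rangle=P$ in $\tilde{G}$ and hence been deleted, so the effective length of $\langle\hat{G}\rangle$ is exactly $n-m(P)$. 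Let $G_P$ be a systematic generator matrix of $\langle\hat{G}\rangle$; then $G_P$ generates an $[n-m(P),k-1]_q$ code whose weights are contained in $W$.

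It remains to check that $C$ is recovered, up to isomorphism, by the extension step of Lemma~\ref{lemma_ILP} applied to $G_P$ with the parameter $r=m(P)$. For this I would reverse the construction: re-append the deleted coordinate, writing the extra row as $(0\dots 0\mid 1\dots 1\mid\star)$ with an all-ones block of length $m(P)$, undo the row operations and column permutation used to bring $\hat{G}$ into systematic form, and finally clear the remaining entries of the extra row by adding suitable multiples of the first $k-1$ rows (and reordering columns). This produces a systematic generator matrix $G'$ of a code isometric to $C$ that has exactly the shape demanded in Lemma~\ref{lemma_ILP}, with first $k-1$ rows equal to $G_P$ and an all-ones block of length $m(P)$. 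Since its weights are those of $C$ and hence lie in $W$, Lemma~\ref{lemma_ILP} provides a feasible solution $(x,y)\in\mathcal{S}(G_P)$ whose $x$-part is the column multiplicity vector of $G'$; the corresponding extension therefore yields a code with the same multiset of points as $G'$, i.e.\ a code isomorphic to $C$.

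The only genuinely new point over Lemma~\ref{lemma_shortening} is the remark that the construction never uses that $P$ is ``the'' multiplicity-$r$ column --- it works for any point realised by a column of $G$, hence for every $P\in\widetilde{\mathcal{M}}$. The main technical step --- as already in Lemma~\ref{lemma_shortening} --- is the verification that shortening along $P$ and then lengthening via Lemma~\ref{lemma_ILP} with $r=m(P)$ are mutually inverse up to isomorphism, which comes down to the bookkeeping of the row operations and column permutations sketched in the previous paragraph; I do not expect any further obstacle.
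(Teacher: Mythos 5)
Your proposal is correct and matches the paper's intent exactly: the paper states Lemma~\ref{lemma_shortening_refined} without a separate proof precisely because the argument for Lemma~\ref{lemma_shortening} (transform the chosen column to a unit vector, delete its $m(P)$ occurrences and the corresponding row, check weights and effective length, then recover $C$ via the extension with $r=m(P)$) carries over verbatim to an arbitrary $P\in\widetilde{\mathcal{M}}$. Your additional bookkeeping (non-vanishing of the remaining columns, reassembling a systematic $G'$ of the shape required by Lemma~\ref{lemma_ILP}) is exactly the right verification and introduces no gap.
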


Now we can use the possibly different weight enumerators of the subcodes generated by $G_P$ to distinguish some of the extension paths. 

\begin{corollary}
  \label{cor_shortening_refined}
  Let $C'$ be an $[n',k',W]_q$ code with generator matrix $G'$, minimum column multiplicity $r$, and $\mathcal{M}$, $\widetilde{\mathcal{M}}$ as defined above. Then there exists a 
  generator matrix $G$ of an $[n'-r,k'-1,W]_q$ code $C$ such that the extension of $G$ via Lemma~\ref{lemma_ILP} yields at least one code that is isomorphic $C'$ and 
  the weight enumerator $w_C(x)$ is lexicographically minimal among the weight enumerators $w_{C_P}(x)$ for all $P\in\widetilde{\mathcal{M}}$ with column multiplicity $r$ in $C'$, 
  where $C_P$ is the linear code generated by the generator matrix $G_P$ from Lemma~\ref{lemma_shortening_refined}.
\end{corollary}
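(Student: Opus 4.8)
The plan is to deduce Corollary~\ref{cor_shortening_refined} directly from Lemma~\ref{lemma_shortening_refined} by minimizing over the finitely many admissible shortening directions. The point is that Lemma~\ref{lemma_shortening_refined} already hands us, for \emph{each} $P\in\widetilde{\mathcal{M}}$, a predecessor generator matrix $G_P$ whose extension via Lemma~\ref{lemma_ILP} reproduces $C'$ up to isomorphism; the corollary then only asks us to restrict attention to the points of minimum multiplicity and, among those, to single out the predecessor with lexicographically smallest weight enumerator.

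First I would introduce the set $\mathcal{Q}:=\{P\in\widetilde{\mathcal{M}}\,:\,m(P)=r\}$ of point classes realizing the minimum column multiplicity $r$ of $G'$. This set is finite, since $\widetilde{\mathcal{M}}\subseteq\mathcal{P}_{k'}$ is, and it is nonempty, because $r$ is attained by some column of $G'$. For every $P\in\mathcal{Q}$, Lemma~\ref{lemma_shortening_refined} provides a generator matrix $G_P$ of an $[n'-m(P),k'-1]_q=[n'-r,k'-1]_q$ code $C_P$ such that extending $G_P$ via Lemma~\ref{lemma_ILP} (with the parameter $r$) yields a code isomorphic to $C'$. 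Next I would check that $C_P$ is in fact an $[n'-r,k'-1,W]_q$ code: inspecting the construction behind Lemma~\ref{lemma_shortening_refined} (namely the shortening step in the proof of Lemma~\ref{lemma_shortening}), $G_P$ arises from a generator matrix of $C'$ by deleting the columns spanning $P$ together with one coordinate row, so its nonzero codeword weights form a subset of those of $C'$, hence lie in $W$, and its effective length is exactly $n'-r$.

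With that in hand, I would recall that the weight enumerator $w_{C_P}(x)$ is a genuine code invariant, independent of the chosen generator matrix, so the finite nonempty family $\{w_{C_P}(x)\,:\,P\in\mathcal{Q}\}$ has a well-defined lexicographically minimal element. Picking $P^*\in\mathcal{Q}$ that attains it and setting $G:=G_{P^*}$ and $C:=C_{P^*}$ then yields a generator matrix of an $[n'-r,k'-1,W]_q$ code whose Lemma~\ref{lemma_ILP}-extension is isomorphic to $C'$ and whose weight enumerator is lexicographically minimal over all $P\in\widetilde{\mathcal{M}}$ with $m(P)=r$, which is exactly the assertion.

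I do not expect a serious obstacle here: the argument is essentially bookkeeping layered on top of Lemma~\ref{lemma_shortening_refined}. The only points deserving a word of care are that $\mathcal{Q}\neq\emptyset$ (forced by $r$ being the minimum multiplicity), that each predecessor $C_P$ genuinely has weights in $W$ and effective length $n'-r$ (inherited from the shortening construction of Lemma~\ref{lemma_shortening}), and that the lexicographic comparison is performed on the weight enumerator, which, being generator-matrix independent, is a legitimate invariant for breaking ties among the candidate shortenings.
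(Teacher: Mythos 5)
Your argument is correct and is essentially the one the paper intends: the corollary follows from Lemma~\ref{lemma_shortening_refined} by restricting to the finitely many points of minimum column multiplicity $r$ and selecting a predecessor whose weight enumerator is lexicographically minimal, with the observations that this set is nonempty and that the shortened codes inherit weights in $W$ and effective length $n'-r$ from the construction in Lemma~\ref{lemma_shortening}. The paper states the corollary without a separate proof, and your bookkeeping fills in exactly the intended steps.
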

  
We remark that the construction for subcodes, as described in Lemma~\ref{lemma_shortening_refined}, can also be applied for points 
$P\in\mathcal{P}_k\backslash\mathcal{M}$. And indeed, we obtain an $[n-m(P),k-1]_q=[n,k-1]_q$ code, i.e., the effective length does not decrease, while the dimension decreases 
by one. 

The algorithmic implication of Corollary~\ref{cor_shortening_refined} is the following. Assume that we want to extend an $[n,k,W]_q$ code $C$ with generator matrix 
$G$ to an $[n+r,k+1,W]_q$ code $C'$ with generator matrix $G'$. If the minimum column multiplicity of $C$ is strictly smaller than $r$, then we do not need to 
compute any extension at all. Otherwise, we compute the set $\mathcal{S}(G)$ of solutions according to Lemma~\ref{lemma_ILP}. If a code $C'$ with generator matrix 
$G'$, corresponding to a solution in $\mathcal{S}(G)$, has a minimum column multiplicity which does not equal $r$, then we can skip this specific solution. For all other 
candidates let $\overline{\mathcal{M}}\subseteq \mathcal{P}_{k+1}$ the set of all different points spanned by the columns of $G'$ that have multiplicity exactly $r$. 
By our previous assumption $\overline{\mathcal{M}}$ is not the empty set. If $w_C(x)$ is the lexicographically minimal weight enumerator among all weight enumerators $w_{C_P}(x)$, 
where $P\in\overline{\mathcal{M}}$ and $C_P$ is generated by the generator matrix $G_P$ from Lemma~\ref{lemma_shortening_refined}, then we store $C'$ and skip it 
otherwise. We call the application of the extension step of Lemma~\ref{lemma_ILP} under these extra assumptions \emph{lexicographical extension} or \emph{lexicographical lengthening}.     

Lexicographical lengthening drastically decrease the ratio between the candidates of linear codes that have to be sifted out and the resulting number of non-isomorphic codes. 
This approach also allows parallelization of our enumeration algorithm, i.e., given an exhaustive list $\mathcal{C}$ of all $[n,k,W]_q$ codes and an integer $r\ge 1$, we can 
split $\mathcal{C}$ into subsets $\mathcal{C}_1,\dots,\mathcal{C}_l$ according to their weight enumerators. If the $[n+r,k+1,W]_q$ code $C'$ arises by lexicographical lengthening 
from a code in $\mathcal{C}_i$ and the $[n+r,k+1,W]_q$ code $C''$ arises by lexicographical lengthening from a code in $\mathcal{C}_j$, where $i\neq j$, then $C'$ and $C''$ 
cannot be isomorphic. As an example, when constructing the even $[21,8,6]_2$ codes from the $17\,927\,353$ $[20,7,6]_2$ codes, we can split the construction into more than 
$1000$ parallel jobs. If we do not need the resulting list of $1\,656\,768\,624$ linear codes for any further computations, there is no need to store the complete list of codes 
%%at any time 
during the computation.

\section{Numerical results}
\label{sec_results}

As the implementation of a practically efficient algorithm for the classification of linear codes is a delicate issue, we exemplarily verify 
several classification results from the literature. Efficiency is demonstrated by partially extending some of these enumeration results. In
Subsection~\ref{subsec_applications} we show up some applications how exhaustive lists of linear codes can be used to find the extremal values 
of certain parameters of linear codes.

In \cite[Research Problem 7.2]{kaski2006classification} the authors ask for the classification of $[n,k,3]_2$ codes for $n>14$. In Table~\ref{tab_n_k_3_16}  
we extend their Table 7.7 to $n\le 16$.

\begin{table}[htp]
  \begin{center}
    \begin{tabular}{r|rrrrrrrrrrr}
      \hline
      $n/k$ & 1 & 2 & 3 & 4 & 5 & 6 & 7 & 8 & 9 & 10 & 11\\ 
      \hline
       3 & 1 \\ 
       4 & 1 \\ 
       5 & 1 & 1 \\ 
       6 & 1 & 3 & 1 \\									 
       7 & 1 & 4 & 4 & 1 \\								
       8 & 1 & 6 & 10 & 5 \\								
       9 & 1 & 8 & 23 & 23 & 5 \\							
      10 & 1 & 10 & 42 & 76 & 41 & 4 \\						
      11 & 1 & 12 & 71 & 207 & 227 & 60 & 3 \\					
      12 & 1 & 15 & 115 & 509 & 1012 & 636 & 86 & 2 \\				
      13 & 1 & 17 & 174 & 1127 & 3813 & 4932 & 1705 & 110 & 1 \\			
      14 & 1 & 20 & 255 & 2340 & 12836 & 31559 & 24998 & 4467 & 127 & 1 \\		
      15 & 1 & 23 & 364 & 4606 & 39750 & 176582 & 293871 & 132914 & 11507 & 143 & 1 \\	
      16 & 1 & 26 & 505 & 8685 & 115281 & 896316 & 2955644 & 3048590 & 733778 & 28947 & 144\\
      \hline 
    \end{tabular}
    \caption{The number of inequivalent $[n,k,3]_2$ codes for $n\le 16$}
    \label{tab_n_k_3_16}
  \end{center}
\end{table}

We remark that the entries \cite[Table 7.7]{kaski2006classification} are given for the number of $[\le n,k,3]_2$ codes in our notation, i.e., the numbers 
in Table~\ref{tab_n_k_3_16} above an entry have to be summed up to be directly compareable. Blank entries correspond to the non-existence of any code with 
these parameters, i.e., there is no $[4,2,3]_2$ code and also no $[16,12,3]_2$ code. Obviously, there is a unique $[n,1,3]_2$ codes for each $n\ge 3$ and 
it is not too hard to show that the number of inequivalent $[n,2,3]_2$ codes is given by
%%$$
$
  \left\lceil\sqrt{\frac{(n-4)(n-3)(2n-7)}{6}}\,\right\rceil
$
%%$$
for each $n\ge 3$. %% The formula gives zero for n=3, n=4, and a strictly positive value for n>=5)
For each dimension $k\ge 1$ the maximum possible length $n$ of an $[n,k,3]_2$ code is also known. I.e., for each integer $r\ge 2$ there exists a unique 
$\left[2^r-1,2^r-r-1,3\right]_2$ code, which is called the $\left(2^r-1,2^r-r-1\right)$ \emph{Hamming code}. Other {\lq\lq}optimal{\rq\rq} codes can be obtained 
by shortening. E.g., there exist $[16+l,11+l,3]_2$ codes for $0\le l\le 15$. Their numbers are given by $144$, $129$, $113$, $91$, $67$, $50$, $34$, $21$, $14$, $9$, 
$5$, $3$, $2$, $1$, $1$, $1$. More precisely, not all these codes can be obtained by shortening, but we have completely classified them. In \cite{ostergaard2002classifying} 
also the number of inequivalent $[\le 15,7,3]_2$ codes was stated, which coincides with our enumeration. The entire computation of Table~\ref{tab_n_k_3_16} took less than 
11~hours of computation time on a single core of a 2.80GHz laptop bought in 2015. As said in \cite{ostergaard2002classifying}, it is not impossible to further extend the range of the 
classification, but we will focus on more interesting enumerations in order to demonstrate that also much larger numbers of codes can be classified. For completeness, we 
remark that we have also replicated the counts in tables 2,3 %%Table~2 and Table~3 
from \cite{ostergaard2002classifying}.   

\begin{table}[htp]
  \begin{center}
    \begin{tabular}{c|ccccccccccc}
      \hline
      $k$ & 
      %%1 & 2 & 3 & 
      4 & 5 & 6 & 7 & 8 & 9 & 10 & 11 & 12 & 13 \\
      \hline
      \# & 
      %% 8 & 67 & 665 & 
      8561\!&\!129586\!&\!1813958\!&\!16021319\!&\!60803805\!&\!73340021\!&\!22198835\!&\!1314705\!&\!11341\!&\!24 \\ 
      \hline
    \end{tabular}
    \caption{The number of inequivalent even $[\le 19,k,4]_2$ codes for $4\le k\le 13$}
    \label{tab_n_k_4_even}
  \end{center}
\end{table}

In \cite[Table 5]{bouyukliev2019classification} the counts of the even $[\le 18,k,4]_2$ codes are stated. We have verified these results and present the 
counts for the even $[\le 19,k,4]_2$ codes in Table~\ref{tab_n_k_4_even}. The counts of the even $[\le 20,k,6]_2$ codes are presented in 
\cite[Table 4]{bouyukliev2019classification}. We have verified these results and extended them to length $n\le 21$ in Table~\ref{tab_n_k_6_even} (excluding the 
enumeration of the even $[21,9,6]_2$ codes\footnote{Already the $17\,927\,353$ even $[20,7,6]_2$ codes can be extended to $1\,656\,768\,624$ even $[21,8,6]_2$ 
codes, so that we skipped the extension of the $39\,994\,046$ even $[20,8,6]_2$ codes.}). To turn these multitude of codes into something more manageable, we have 
used those results to classify all even $[k+10,k,6]_2$ codes. For $k\ge 12$ their numbers are given by $127$, $8$, and $1$, i.e., there is a unique even $[24,14,6]_2$ 
code, which is e.g.\ generated by
$$
\begin{pmatrix}
111111100010000000000000\\
000111111101000000000000\\
111011111100100000000000\\
001101100100010000000000\\
011010101000001000000000\\
110001110000000100000000\\
111101011000000010000000\\
101110001000000001000000\\
110110110100000000100000\\
101010110000000000010000\\
101011000100000000001000\\
100010011100000000000100\\
110101000100000000000010\\
101001101000000000000001
\end{pmatrix},
$$
has weight enumerator
$$
  w_C(x)=x^{0}+336x^{6}+1335x^{8}+3888x^{10}+5264x^{12}+3888x^{14}+1335x^{16}+336x^{18}+x^{24},
$$
and has an automorphism group of order $96$. The non-existence of a $[25,15,6]_2$ code is well-known \cite{simonis1987binary}. 

\begin{table}[htp]
  \begin{center}
    \begin{tabular}{c|ccccccccccc}
      \hline
      $k$ & 3 & 4 & 5 & 6 & 7 & 8 & 10 & 11  \\
      \hline
      \# & 726 & 12817 & 358997 & 11697757 & 246537467 & 1697180017 & 62180809 & 738 \\  
      \hline
    \end{tabular}
    \caption{The number of even $[\le 21,k,6]_2$ codes for $3\le k\le 11$, $k\neq 9$}
    \label{tab_n_k_6_even}
  \end{center}
\end{table}

For length $n=20$ the most time expensive step, i.e., extending the $[19,7,6]_2$ codes to $[20,8,6]_2$ codes, took roughly 
250~hours of computation time on a single core of a 2.80GHz laptop. We remark that the $[19,k,4]_2$ codes, where $k\in\{7,8,9,10\}$, and 
the $[21,k,6]_2$ codes, where $k\in\{7,8,10\}$, were enumerated in parallel, i.e., we have partially used the computing nodes of the 
\emph{High Performance Computing Keylab} from the University of Bayreuth. We have used the oldest cluster btrzx5 that went into operation in 
2009.\footnote{The precise technical details can be found at \url{https://www.bzhpc.uni-bayreuth.de/de/keylab/Cluster/btrzx5_page/index.html}.} 
This setup is chosen as an endurance test for our algorithm with hundred parallel jobs. During execution a few hard disks and CPUs died. We have 
tried our very best to detect possible hardware failures and to rerun all suspicious jobs. However, we are not 100\% sure that in those mentioned 
cases, which run on the computing cluster, the stated numbers are correct, which makes it a perfect opportunity for independent verification by 
other algorithms. 

\begin{table}[htp]
  \begin{center}
    %%\begin{tabular}{c|ccccccccccc}
    \begin{tabular}{r|rrrrrrrrrrrrrr}
      \hline
      $n/k$ & 2 & 3 & 4 & 5 & 6 & 7 & 8 \\ 
      \hline  
      35 & 0 & 1 & 4 & 4 & 3 & 1 & 0\\	
      36 & 4 & 10 & 22 & 13 & 4 & 0& 0\\	
      37 & 0 & 2 & 7 & 10 & 3 & 1 & 0\\	
      38 & 0 & 1 & 6	& 12 & 10 & 3 & 1\\
      39 & 3 & 15 & 34 & 41 & 23 & 8 & 2\\
      40 & 0 & 6 & 25 & 40 & 30 & 10 & 1\\
      41 & 0 & 0 & 0	& 0 & 0 & 0 & 0\\
      42 & 2 & 17 & 52 & 44 & 15	& 0 & 0\\
      43 & 0 & 6 & 32 & 40 & 16 & 3 & 0\\
      44 & 0 & 2 & 14 & 22 & 17 & 6 & 1\\
      45 & 5 & 31 & 141 & 190 & 72 & 13 & 0\\
      46 & 0 & 6 & 56 & 122 & 71 & 18 & 3\\
      47 & 0 & 2 & 29 & 92 & 89 & 36 & 8\\
      48 & 5 & 44 & 297 & 705 & 468 & 128 & 28\\
      49 & 0 & 15 & 177 & 613 & 596 & 219 & 37\\
      50 & 0 & 2 & 39 & 217 & 295 & 149 & 40\\
      51 & 3 & 54 & 572 & 2405 & 2263 & 712 & 165\\
      52 & 0 & 18 & 333 & 1828 & 2909 & 1595 & 448\\
      53 & 0 & 6 & 116 & 1008 & 3512 & 3018 & 815\\
      54 & 8 & 91 & 1427 & 11121 & 23835 & 16641 & 2718\\
      55 & 0 & 19 & 651 & 4682 & 5839 & 1789 & 212\\
      \hline
    \end{tabular}
    \caption{The number of $9$-divisible $[n,k,9]_3$ codes for $35\le n\le 55$ and $2\le k\le 8$}
    \label{tab_n_k_9_div_q_3}
  \end{center}
\end{table}
          
Moreover, we have verified 
\begin{itemize}
  \item[-] the explicit numbers of the optimal binary codes of dimension~8 in \cite[Table 8]{bouyukliev2019classification};
  \item[-] the %%intermediate 
           enumerations results for the uniqueness of the $[46,9,20]_2$ code presented in \cite{kurz201946};
  \item[-] the enumeration of the projective $2$, $4$-, and $8$-divisible binary linear codes from \cite{ubt_eref40887};
  \item[-] the counts of $9$-divisible ternary codes in \cite[Table 6]{bouyukliev2019classification}; and
  \item[-] the counts of $4$-divisible quaternary codes in \cite[Table 7]{bouyukliev2019classification}.
\end{itemize}

Just to also have an extended example for a field size $q>2$ we have extended the results from \cite[Table 6]{bouyukliev2019classification} 
on $9$-divisible ternary codes to dimensions $k\le 8$ and length $n\le 55$, see Table~\ref{tab_n_k_9_div_q_3}. The conspicuous zero row 
for length $n=41$ has a theoretical explanation, i.e., there is no $9$-divisible $[41,k]_3$ code at all, see \cite[Theorem 1]{divisibleIEEE}.\footnote{More precisely,
$41=2\cdot 13+2\cdot 12-1\cdot 9$ is a certificate for the fact that such a code does not exist, see \cite[Theorem 1, Example 6]{divisibleIEEE}.}

\subsection{Applications}
\label{subsec_applications}

In this subsection we want to exemplarily show up, that exhaustive enumeration results of 
linear codes can of course be used to obtain results for special subclasses of codes and 
their properties by simply checking all codes. For our first example we remark that the 
support of a codeword is the set of its non-zero coordinates. A non-zero codeword $c$ of a 
linear code $C$ is called minimal if the support of no other non-zero codeword is contained 
in the support of $c$, see e.g.~\cite{ashikhmin1998minimal}. By $m_2(n,k)$ we denote the minimum 
number of minimal codewords of a projective\footnote{Duplicating columns in a binary 
linear code generated by the $k\times k$ unit matrix results in exactly $k$ minimal codewords, which 
is the minimum for all $k$-dimensional codes.} $[n,k]_2$ code. In Table~\ref{tab_minimal_codewords} 
we state the exact values of $m_2(n,k)$ for all $2\le k\le n\le 15$ obtained by enumerating all 
projective codes with these parameters.   
  
\begin{table}[htbp]
\begin{center}
{\small
\begin{tabular}{|c|c|c|c|c|c|c|c|c|c|c|c|c|c|c|}\hline
$n/k$ & 2 & 3   & 4     & 5     & 6     & 7     & 8     & 9  & 10 & 11 & 12 & 13 & 14 & 15 \\\hline
3     & 3 & 3   &       &       &       &       &       &    &    &    &    &    &    & \\\hline
4     &   & 4   & 4     &       &       &       &       &    &    &    &    &    &    & \\\hline
5     &   & 6   & 5     & 5     &       &       &       &    &    &    &    &    &    & \\\hline
6     &   & 7   & 6     & 6     & 6     &       &       &    &    &    &    &    &    & \\\hline
7     &   & 7   & 8     & 7     & 7     & 7     &       &    &    &    &    &    &    & \\\hline
8     &   &     & 8     & 9     & 8     & 8     & 8     &    &    &    &    &    &    & \\\hline
9     &   &     & 12    & 9     & 9     & 9     & 9     & 9 &    &    &    &    &    & \\\hline
10    &   &     & 14    & 10    & 10    & 10    & 10    & 10 & 10 &    &    &    &    & \\\hline
11    &   &     & 14    & 15    & 11    & 11    & 11    & 11 & 11 & 11 &    &    &    & \\\hline
12    &   &     & 15    & 15    & 13    & 12    & 12    & 12 & 12 & 12 & 12 &    &    & \\\hline
13    &   &     & 15    & 16    & 14    & 13    & 13    & 13 & 13 & 13 & 13 & 13 &    & \\\hline
14    &   &     & 15    & 16    & 14    & 15    & 14    & 14 & 14 & 14 & 14 & 14 & 14 & \\\hline
15    &   &     & 15    & 16    & 17    & 15    & 16    & 15 & 15 & 15 & 15 & 15 & 15 & 15\\
\hline\end{tabular}}
\end{center}
\caption{$m_2(n,k)$ for $3\leq n\leq 15, 1\leq k\leq 9$}
\label{tab_minimal_codewords}
\end{table}

In our second example we want to use the enumeration results from Table~\ref{tab_n_k_9_div_q_3} on 
ternary $9$-divisible linear codes. In \cite{no59} it was mentioned that the smallest length $n$ 
of a projective ternary $9$-divisible linear code whose existence is unknown is $n=70$. The possible 
weights are $9$, $18$, $27$, $36$, $45$, and $54$, since a codeword with weight $63$ would yield 
a projective $3$-divisible $[7,k]_3$ code, which does not exist, see \cite{divisibleIEEE}. Of course it 
is in principle possible to enumerate all $9$-divisible $[70,k]_3$ codes. However, there are already 
$85037$ such $[70,4]_3$ codes and their numbers explode with increasing dimension $k$. So, let us first 
derive some conditions on a hypothetical $9$-divisible $[70,k]_3$ code $C$. By $A_i$ we denote the number 
of codewords of weight $i$ of $C$ and by $B_i$ the number of codewords of weight $i$ of the dual code of 
$C$. The first equations of the well-known MacWilliams identities, see e.g.~\cite{macwilliams1977theory}, 
are given by:
\begin{eqnarray}
  1\!+\!A_9\!+\!A_{18}\!+\!A_{27}\!+\!A_{36}\!+\!A_{45}\!+\!A_{54} &\!\!\!\!\!=\!\!\!\!\!& 3^k \label{mw1}\\
  70\!+\!61 A_{9}\!+\!52 A_{18}\!+\!43 A_{27}\!+\!34 A_{36}\!+\!25 A_{45}\!+\!16 A_{54} &\!\!\!\!\!=\!\!\!\!\!& 70\cdot 3^{k\!-\!1} \label{mw2}\\
  2415\!+\!1830 A_9\!+\!1326 A_{18}\!+\!903 A_{27}\!+\!561 A_{36}\!+\!300 A_{45}\!+\!120 A_{54} &\!\!\!\!\!=\!\!\!\!\!& 2415\cdot 3^{k\!-\!2}\label{mw3}\\
  \!\!\!\!\!\!\!\!\!\!\!\!\!\!\!\!\!54740\!+\!35990 A_{9}\!+\!22100 A_{18}\!+\!12341 A_{27}\!+\!5984 A_{36}\!+\!2300 A_{45}\!+\!560 A_{54} &\!\!\!\!\!=\!\!\!\!\!& \left(54740\!+\!B_3\right)3^{k\!-\!3} \label{mw4}
\end{eqnarray}
$20$ times Equation~(\ref{mw1}) minus $2$ times Equation~(\ref{mw2}) plus $\tfrac{1}{10}$ times Equation~(\ref{mw3}) gives
$$
  \frac{3^5}{2} + 81 A_9 + \frac{243 A_{18}}{5} + \frac{243 A_{27}}{10} + \frac{81 A_{36}}{10} =\frac{3^k}{6},
$$ 
so that $k\ge 6$, since $A_i\ge 0$. For $k=6$ the polyhedron given by equations (\ref{mw1})-\ref{mw4}) and the nonegativity constraints 
$A_i,B_3\ge 0$ contains the unique point
$$
  A_9=A_{18}=A_{27}=A_{36}=0, A_{45}=588, A_{54}=140,\text{ and }B_3=280.
$$
However, a linear code $C$ with these parameters would be a $2$-weight code and the corresponding strongly regular graph 
does not exist, see e.g.~\cite{brouwerdistance} for the details. (We have also excluded this case by exhaustively 
enumerating the (non-existent) $[70,6,\{45,54\}]_3$ codes.) Thus, we can assume $k\ge 7$. For $k=7$ we can again 
consider the polyhedron given by equations (\ref{mw1})-\ref{mw4}) and the nonegativity constraints $A_i,B_3\ge 0$. 
Additionally we can assume that the $A_i$ are even integers. By solving the corresponding integer linear programs 
we can verify $A_9\le 2$, $A_{18}\le 4$, $A_{27}\le 10$, and $A_{36}\le 20$. Moreover, the first two constraints can 
be tightened to $2A_9+A_{18}\le 4$. We also can derive a condition on the length and the minimum column multiplicity, 
i.e., if a $9$-divisible $[n,k]_3$ code $C$ has minimum column multiplicity $\Lambda$ and $n+(7-k)\cdot \Lambda<70$, 
then $C$ cannot be extended to a $9$-divisible $[70,7]_3$ code via canonical lengthening, since in each extension 
step the length can increase by at most $\Lambda$. With those conditions we have performed a restricted generation of 
linear codes. We have indeed constructed a few hundred of $[69,6,\{9,18,27,36,45,54\}]_3$ codes with maximum column 
multiplicity $3$. However, none of these was extendable to a projective $9$-divisible $[70,7]_3$ code and we 
conjecture that no such code exists. Nevertheless, the above extra conditions drastically reduce the search space, 
it is still too large for our current implementation. In our computational experiments we have stopped the extension 
using \texttt{Solvediophant} after 10~minutes for each code, while we have seen unfinished lattice point enumerations 
lasting several hours. Moreover, we were not able to extend all $5$-dimensional codes due to their large number.

\section{Conclusion}
\label{sec_conclusion}
  
We have presented an algorithm for the classification of linear codes over finite fields based on lattice 
point enumeration. The lattice point enumeration itself and sifting out isomorphic copies is so far done 
with available scientific software packages. Using invariants like the weight enumerator of subcodes, see 
Corollary~\ref{cor_shortening_refined}, the number of candidates before sifting could kept reasonably small.  
The resulting algorithm is quite competitive compared to e.g.\ the recent algorithm described in 
\cite{bouyukliev2019classification}. There the authors used the appealing technique of canonical augmentation 
or orderly generation, see e.g.~\cite{royle1998orderly}. The advantage that no pairs of codes have to be checked 
whether they are isomorphic comes at the cost that the computation of the canonical form is relatively costly, 
see \cite{bouyukliev2019classification}. Allowing not only a single canonical extension, but a relatively small 
number of extensions that may lead to isomorphic codes, might be a practically efficient alternative. We have 
also demonstrated that the algorithm can be run in parallel.

However, we think that our implementation can still be further improved. In some cases the used lattice point 
enumeration algorithm \texttt{Solvediophant} takes quite long to verify that a certain code does not allow an 
extension, while integer linear programming solvers like e.g.~\texttt{Cplex} quickly verify infeasibility. 
Especially the computational experiments at the end of Subsection~\ref{subsec_applications} suggest, that it is 
worthwhile to try to speed up the lattice point enumeration. We propose the extension of Table~\ref{tab_n_k_9_div_q_3} 
as a specific open problem. 

Also it would be beneficial if at least some restriction of a lexicographical extension could be directly 
formulated as valid constraints in the integer linear programming formulation of Lemma~\ref{lemma_ILP}. So far 
we have not used known automorphisms of the linear code that should be extended. It is not implausible to expect 
that there for different parameter ranges different algorithmic choices can perform better. In any case, we have 
demonstrated that it is indeed possible to exhaustively classify sets of linear codes of magnitude $10^9$, which was 
not foreseeable  at the time of \cite{kaski2006classification}. 

Currently the implementation of the evolving software package \texttt{LinCode} is not that	progressed to be 
made publicy available. So, we would like to ask the readers to sent their interesting enumeration problems 
of linear codes to the author directly. 

%%\bibliographystyle{amsplain}
%\bibliographystyle{abbrv}
%\bibliography{LinCode}

\end{document}